\newcommand{\R}{\mathbb R}
\newcommand{\E}{\mathbb E}
\newcommand{\tr}{\mathrm{tr}}
\newtheorem{theorem}{Theorem}
\newtheorem{corollary}[theorem]{Corollary}
\newtheorem{proposition}[theorem]{Proposition}
\begin{document}

\title[Meridian Surfaces on Rotational Hypersurfaces with Lightlike Axis in ${\mathbb E}^4_2$]{Meridian Surfaces on Rotational Hypersurfaces with Lightlike Axis in  ${\mathbb E}^4_2$}

\author{Velichka Milousheva}
\address{Institute of Mathematics and Informatics, Bulgarian Academy of Sciences,
Acad. G. Bonchev Str. bl. 8, 1113, Sofia, Bulgaria;   "L. Karavelov"
Civil Engineering Higher School, 175 Suhodolska Str., 1373 Sofia,
Bulgaria} \email{vmil@math.bas.bg}

\subjclass[2010]{53A35, 53B30, 53B25}
\keywords{Meridian surfaces, pseudo-Euclidean space with neutral metric, constant Gauss curvature, parallel mean curvature vector, parallel normalized mean curvature vector}

\begin{abstract}
We construct a special class of Lorentz surfaces in the
pseudo-Euclidean 4-space with neutral metric which are one-parameter systems of meridians of
 rotational hypersurfaces with lightlike axis and call them meridian surfaces. 
We give the complete classification of the meridian surfaces with constant Gauss curvature and prove  that there are no meridian surfaces with parallel mean curvature vector field other than CMC surfaces lying in a hyperplane. We also classify the meridian surfaces with parallel normalized mean curvature vector field. 
We show that in the family of the meridian surfaces there exist  Lorentz surfaces which have parallel normalized mean curvature vector field but not parallel mean curvature vector.
\end{abstract}

\maketitle

\section{Introduction}

A fundamental problem of the contemporary
differential geometry of surfaces  and hypersurfaces in standard model spaces such as the Euclidean space $\E^n$ and the pseudo-Euclidean space $\E^n_k$ is the investigation of the basic invariants characterizing the surfaces. 
Curvature invariants are the number one Riemannian invariants and the most natural ones. 
The basic curvature invariants of a surface in 4-dimensional Euclidean or pseudo-Euclidean space  are the Gauss curvature and the curvature of the normal connection. The most important normal vector field of a surface is the mean curvature vector field.  So, a fundamental   question is to investigate various important classes of surfaces characterized by conditions on the Gauss curvature, the normal curvature, or the mean curvature vector field, and to find examples of surfaces belonging to these classes. 

Rotational surfaces and hypersurfaces are basic source of examples of many geometric classes of surfaces in Riemannian and pseudo-Riemannian geometry.
The main purpose of this paper is to provide a comprehensive survey on a special class of surfaces (called meridian surfaces) in 4-dimensional Euclidean or pseudo-Euclidean spaces  which are one-parameter systems of meridians of  rotational hypersurfaces. We present briefly recent results on meridian surfaces in the Euclidean space $\E^4$ and the Minkowski space $\E^4_1$.

In  the present paper, the new contribution to the theory of meridian surfaces  is the construction of  2-dimensional Lorentz surfaces in the pseudo-Euclidean space  $\E^4_2$ which are one-parameter systems of meridians of a rotational hypersurface with  lightlike axis.  
They are analogous to the meridian surfaces lying on rotational hypersurfaces with spacelike or timelike axis in $\E^4_2$ which have been studied in  \cite{BM1} and \cite{BM2}. We show that all meridian surfaces are surfaces with flat normal connection and  classify completely  the meridian surfaces   with
constant Gauss curvature (Theorem \ref{T:flat} and Theorem \ref{T:Gauss curvature}). In Theorem \ref{T:parallel-H-p}  we give the classification of the meridian surfaces with parallel mean curvature vector field $H$. Theorem \ref{T:parallel-H0-p} describes all  meridian surfaces  which have parallel normalized  mean curvature vector field but not parallel $H$.

\section{Preliminaries}

Let $\mathbb{E}_{2}^{4}$ be the $4$-dimensional pseudo-Euclidean space with the canonical pseudo-Euclidean metric  of index $2$ given in local coordinates by
\begin{equation*}
\widetilde{g}=dx_{1}^{2}+dx_{2}^{2}-dx_{3}^{2}-dx_{4}^{2}, 
\end{equation*}
where $\left( x_{1},x_{2},x_{3},x_{4}\right) $ is a rectangular coordinate
system of $\mathbb{E}_{2}^{4}$.  Denote by $ \langle ., . \rangle$ the indefinite inner scalar product associated with  $\widetilde{g}$.
Since $\widetilde{g}$ is an indefinite
metric, a vector $v \in \E^4_2$ can have one of
the three casual characters:  \textit{spacelike} if $\langle v, v \rangle >0$
or $v=0$, \textit{timelike} if $\langle v, v \rangle<0$, and \textit{lightlike} if $\langle v, v \rangle =0$ and $v\neq 0$. This terminology is inspired by general relativity and the Minkowski 4-space $\E^4_1$.

We use the following standard denotations:
\begin{equation*}
\begin{array}{l}
\vspace{2mm}
\mathbb{S}^3_2(1) =\left\{V\in \E^4_2: \langle V, V \rangle =1 \right\}; \\
\vspace{2mm}
\mathbb{H}^3_1(-1) =\left\{ V\in \E^4_2: \langle V, V \rangle = -1\right\}.
\end{array}
\end{equation*}
The space $\mathbb{S}^3_2(1)$ is known as the de Sitter space, and the
space $\mathbb{H}^3_1(-1)$ is the hyperbolic space (or the anti-de Sitter space) \cite{O'N}.

 A surface $M$ in $\E^4_2$
 is called \emph{Lorentz}, if $\langle  . , . \rangle$ induces  a Lorentzian
metric $g$ on $M$, i.e. at each point $p\in M$ we have the following decomposition
$$\E^4_2 = T_pM \oplus N_pM$$
with the property that the restriction of the metric
onto the tangent space $T_pM$ is of
signature $(1,1)$, and the restriction of the metric onto the normal space $N_pM$ is of signature $(1,1)$.

Denote by $\nabla $ and $\overline{\nabla}$ the Levi-Civita connections
of $M$ and $\mathbb{E}_{2}^{4}$, respectively. For any tangent vector fields $X,Y$
 and any normal vector
field $\xi$, the Gauss formula and the Weingarten formula are given by
\begin{equation*}
\begin{array}{l}
\vspace{2mm}
\overline{\nabla}_{X}Y=\nabla _{X}Y+h(X,Y),\\
\vspace{2mm}
\overline{\nabla}_{X}\xi =-A_{\xi }X+D_{X}\xi,
\end{array}
\end{equation*}
where $h$ is the second fundamental form of $M$, $D$ is the normal
connection on the normal bundle, and  $A_{\xi}$ is the shape operator with respect to $\xi$.

The mean curvature vector field $H$ of $M$ in $\mathbb{E}_{2}^{4}$
is defined as $H=\frac{1}{2}\, \tr \, h$.
A surface $M$ is called \textit{minimal} if its mean curvature vector vanishes
identically, i.e. $H=0$. A natural extension of minimal surfaces are quasi-minimal surfaces. A surface $M$ is called \textit{quasi-minimal}
(or \textit{pseudo-minimal}) if its mean curvature vector is lightlike at each point,
i.e. $H\neq 0$ and $\left \langle H,H\right \rangle =0$. In the Minkowski space $\E^4_1$ the quasi-minimal surfaces are also called \textit{marginally trapped}. This notion is borrowed from general relativity. A surface 
$M$ is said to have constant mean curvature  if $\langle H, H \rangle = const.$
We shall consider Lorentz surfaces in $\E^4_2$ for which  $\langle H, H \rangle = const \neq 0$. Such surfaces we call \textit{CMC surfaces}.

A normal vector field $\xi$ on $M$ is called \emph{parallel in the normal bundle} (or simply \emph{parallel}) if $D{\xi}=0$ holds identically \cite{Chen2}. A surface $M$ is said to have \emph{parallel mean curvature vector field} if its mean curvature vector $H$
satisfies $D H =0$.

Surfaces for which the mean curvature vector field $H$ is non-zero, $\langle H, H \rangle \neq 0$, and  there exists a unit vector field $b$ in the direction of the mean curvature vector
 $H$, such that $b$ is parallel in the normal
bundle, are called surfaces with \textit{parallel normalized mean curvature vector field} \cite{Chen-MM}.
It is easy to see  that if $M$ is a surface  with non-zero parallel mean curvature vector field $H$ (i.e. $DH = 0$),
then $M$ is a surface with parallel normalized mean curvature vector field, but the converse is not true in general.
It is true only in the case $\Vert H \Vert = const$.

\section{Construction of Meridian Surfaces in Pseudo-Euclidean 4-Space}

Meridian surfaces in the Euclidean 4-space $\E^4$ we defined  in \cite{GM2}  as one-parameter systems of meridians of the
standard rotational hypersurface in $\mathbb{E}^{4}$.
The classification of  meridian surfaces with constant Gauss curvature, with constant mean curvature, Chen meridian surfaces and  meridian surfaces with parallel normal bundle is given in \cite{GM2}  and \cite{GM-BKMS}. The meridian surfaces in $\E^4$ with  pointwise 1-type Gauss map are classified in \cite{ABM}. The idea from the Euclidean space is used in \cite{GM6}, \cite{GM-MC}, and \cite{GM7}  for the construction of meridian spacelike  surfaces lying on rotational hypersurfaces in $\mathbb{E}_{1}^{4}$ with timelike, spacelike, or lightlike axis. The classification of marginally trapped meridian surfaces is given in  \cite{GM6} and \cite{GM7}. 
Meridian surfaces in $\E^4_1$ with pointwise 1-type Gauss map are classified in \cite{AM}. The classification of  meridian surfaces  with constant Gauss curvature, with constant mean curvature, Chen meridian surfaces and  meridian surfaces with parallel normal bundle is given in \cite{GM-MC} and \cite{GM8}.

Following the idea from the Euclidean and Minkowski spaces, in \cite{BM1} and \cite{BM2} we constructed Lorentz meridian surfaces in the pseudo-Euclidean 4-space $\E^4_2$  as one-parameter systems of meridians of rotational hypersurfaces with timelike or
spacelike axis. We gave the classification of quasi-minimal meridian  surfaces and meridian surfaces with constant mean curvature  \cite{BM1}. The classification of meridian surfaces with parallel mean curvature vector field and the classification  of meridian surfaces with parallel normalized mean curvature vector is given in \cite{BM2}.

In the present paper we construct Lorentz meridian surfaces in  $\E^4_2$ which are  one-parameter systems of meridians of rotational hypersurfaces with lightlike axis.

\vskip 2mm
Let $Oe_1 e_2 e_3 e_4$ be a fixed orthonormal coordinate system in $\E^4_2$, i.e. $\langle e_1,e_1 \rangle
=\langle e_2,e_2 \rangle =1, \langle e_3,e_3 \rangle = \langle e_4,e_4 \rangle = -1$. We denote  $\displaystyle{\xi_1= \frac{e_2 + e_4}{\sqrt{2}}},\,\, \displaystyle{\xi_2= \frac{ - e_2 + e_4}{\sqrt{2}}}$ and consider the
 pseudo-orthonormal base $\{e_1, e_3, \xi_1, \xi_2 \}$  of $\mathbb{E}^4_2$.
Note that $\langle\xi_1, \xi_1 \rangle =0$, $\langle \xi_2, \xi_2
\rangle =0$, $\langle \xi_1, \xi_2 \rangle = -1$.

 A rotational
hypersurface with lightlike axis  in $\mathbb{E}^4_2$ can be parametrized by
$$\mathcal{M}: Z(u,w^1,w^2) =  f(u)\, w^1 \cosh w^2 e_1 +  f(u)\, w^1 \sinh w^2 e_3+ \left(f(u) \frac{(w^1)^2}{2} + g(u)\right) \xi_1 + f(u) \xi_2,$$
where $f = f(u), \,\, g = g(u)$ are smooth functions, defined in
an interval $I \subset \mathbb{R}$ and  $f(u)>0, \,\, u \in I$.

Let $w^1 = w^1(v), \, w^2=w^2(v), \,\, v \in J, \,J \subset \R$
and consider the surface $\mathcal{M}_m$ in $\mathbb{E}^4_2$ given by
\begin{equation} \label{E:Eq-1-par}
\mathcal{M}_m: z(u,v) = Z(u,w^1(v),w^2(v)),
\end{equation}
where $u \in I, \, v \in J.$ The surface $\mathcal{M}_m$,
defined by \eqref{E:Eq-1-par}, is a one-parameter system of meridians
of the rotational hypersurface $\mathcal{M}$. So, we call $\mathcal{M}_m$ a \textit{meridian surface
on $\mathcal{M}$}.

Without loss of generality we can assume that $w^1 = \varphi(v), \,
w^2=v$. Then the meridian surface $\mathcal{M}_m$ is parametrized as
follows:
\begin{equation} \label{E:Eq-2-par}
\mathcal{M}_m: z(u,v) = f(u) \left(\varphi(v) \cosh v \,e_1 + \varphi(v) \sinh v \,e_3+ \frac{\varphi^2(v)}{2} \,\xi_1 + \xi_2 \right) + g(u) \, \xi_1.
\end{equation}
If we denote $l(v) = \varphi(v) \cosh v \,e_1 + \varphi(v) \sinh v \,e_3+ \frac{\varphi^2(v)}{2} \,\xi_1 + \xi_2$, then the parametrization \eqref{E:Eq-2-par} is written as 
 \begin{equation*} 
\mathcal{M}_m: z(u,v) = f(u) \,l(v) + g(u) \, \xi_1.
\end{equation*} 

Now we shall find the coefficients of the first  fundamental form of $\mathcal{M}_m$. 
The tangent vector fields $z_u$ and $z_v$ are
\begin{equation} \label{E:Eq-6-par}
\begin{array}{l}
\vspace{2mm}
z_u = \displaystyle{f' \varphi \cosh v \,e_1 + f'
\varphi \sinh v \,e_3+ \left(f' \frac{\varphi^2}{2} +
g'\right)\xi_1 + f' \,\xi_2};\\
\vspace{2mm}
z_v = f( \dot{\varphi} \cosh v + \varphi \sinh v)\,e_1 + f (
\dot{\varphi} \sinh v + \varphi \cosh v) \,e_3+ f \varphi \dot{\varphi} \, \xi_1,
\end{array}
\end{equation}
where $\dot{\varphi}$  denotes the derivative of $\varphi$ with respect to $v$.
So, the coefficients of the first fundamental form are
$$E = - 2 f'(u) g'(u); \quad F = 0; \quad G = f^2(u) (\dot{\varphi}^2(v) - \varphi^2(v)).$$
Since we are studying Lorentz surfaces, in the case $\dot{\varphi}^2(v) - \varphi^2(v) >0$ we assume that $f'(u) g'(u) >0$; in the case  $\dot{\varphi}^2(v) - \varphi^2(v) <0$ we assume that $f'(u) g'(u) <0$.

We consider the tangent frame field defined by 
$X = \displaystyle{\frac{z_u}{\sqrt{2\varepsilon f' g'}}}$,  $Y =  \displaystyle{\frac{z_v}{f \sqrt{\varepsilon (\dot{\varphi}^2 - \varphi^2)}}}$, where $\varepsilon = 1$ in the case $\dot{\varphi}^2 - \varphi^2 >0$, $f' g' >0$, and $\varepsilon = - 1$ in the case $\dot{\varphi}^2 - \varphi^2 <0$, $f' g' <0$.
Thus we have $\langle X, X\rangle = -\varepsilon$, $\langle Y, Y\rangle = \varepsilon$, $\langle X, Y\rangle = 0$.
Let us choose the following  normal frame field:
\begin{equation} \label{E:Eq-7-par}
\begin{array}{l}
\vspace{2mm}
n_1 = \displaystyle{ \sqrt{\frac{\varepsilon f'}{2 g'} }\left( \varphi \cosh v \,e_1 + \varphi \sinh v \,e_3 + \frac{f' \varphi^2 - 2g'}{2f'} \, \xi_1 +
 \xi_2\right)}; \\
\vspace{2mm}
n_2 = \displaystyle{\frac{1}{\sqrt{\varepsilon (\dot{\varphi}^2 - \varphi^2)}}  \left((\dot{\varphi} \sinh v + \varphi \cosh v)\,e_1 + (\dot{\varphi} \cosh v + \varphi \sinh v)\,e_3 +
 \varphi^2  \, \xi_1\right)},
\end{array}
\end{equation}
which satisfies $\langle n_1, n_1 \rangle = \varepsilon$, $\langle n_2, n_2 \rangle = -\varepsilon$, $\langle n_1, n_2 \rangle = 0$.
Taking into account
\eqref{E:Eq-6-par}, we calculate the second partial derivatives of $z(u,v)$:
\begin{equation*}
\begin{array}{l}
\vspace{2mm}
z_{uu} = \displaystyle{f'' \varphi \cosh v \,e_1 + f'' \varphi \sinh v \,e_3+ \left(f''\frac{\varphi^2}{2} +
g''\right)\xi_1 + f'' \,\xi_2};\\
\vspace{2mm}
z_{uv} =  \displaystyle{f'(\dot{\varphi} \cosh v + \varphi \sinh v)\,e_1 + f' (\dot{\varphi} \sinh v  + \varphi \cosh v)\,e_3+ f' \varphi \dot{\varphi} \,\xi_1};\\
\vspace{2mm}
z_{vv} =  f\left( (\ddot{\varphi} + \varphi) \cosh v + 2 \dot{\varphi} \sinh v\right) e_1 + f\left( (\ddot{\varphi}+ \varphi) \sinh v + 2\dot{\varphi} \cosh v\right) e_3+ f\left(\dot{\varphi}^2 + \varphi  \ddot{\varphi}\right)  \xi_1.
\end{array}
\end{equation*}
The last equalities together with  \eqref{E:Eq-7-par} imply
\begin{equation*} 
\begin{array}{ll}
\vspace{2mm}
\langle z_{uu}, n_1 \rangle =  \displaystyle{\frac{f''g' - g'' f'}{\sqrt{2 \varepsilon f' g'}}}; & \qquad \langle z_{uu}, n_2 \rangle = 0;\\
\vspace{2mm}
\langle z_{uv}, n_1 \rangle = 0; & \qquad  \langle z_{uv}, n_2 \rangle = 0;\\
\vspace{2mm}
\langle z_{vv}, n_1 \rangle =  \displaystyle{- f  \sqrt{\frac{\varepsilon f'}{2g'}}\,(\dot{\varphi} ^2 - \varphi^2)}; 
 &
\qquad
\langle z_{vv}, n_2 \rangle =  \displaystyle{f \, \frac{\varphi \ddot{\varphi} - 2 \dot{\varphi}^2 + \varphi^2 }{\sqrt{\varepsilon (\dot{\varphi} ^2 - \varphi^2)}}}.
\end{array}
\end{equation*}
Hence, we obtain
\begin{equation} \label{E:Eq-9-par} 
\begin{array}{l}
\vspace{2mm}
h(X,X) = \varepsilon  \displaystyle{\frac{f''g' - g'' f'}{(2 \varepsilon f' g')^{\frac{3}{2}}}} \,n_1; \\
\vspace{2mm}
h(X,Y) =  0; \\
\vspace{2mm}
h(Y,Y) =   \displaystyle{- \frac{1}{f}  \sqrt{\frac{\varepsilon f'}{2g'}}}\,n_1 - \displaystyle{\varepsilon \frac{\varphi \ddot{\varphi} - 2 \dot{\varphi}^2 + \varphi^2 }{f (\varepsilon (\dot{\varphi} ^2 - \varphi^2))^{\frac{3}{2}}}} \, n_2.
\end{array}
\end{equation}

Now, we shall consider the parametric lines of the
meridian surface $\mathcal{M}_m$. 
The parametric  $u$-line $v = v_0 = const$ is given by
\begin{equation*} 
c_u: z(u) = c \alpha f(u)\,e_1 + c \beta f(u)\,e_3+ \left( \frac{c^2}{2} f(u)
 + g(u)\right)\xi_1 + f(u) \,\xi_2,
\end{equation*}
where $\alpha = \cosh v_0$, $\beta = \sinh v_0$, $c = \varphi(v_0)$. 
So, the unit tangent vector field $t_{c_u}$ of $c_u$ is:
$$t_{c_u}=\frac{1}{\sqrt{2 \varepsilon f' g'}} \left(c \alpha f'\,e_1 + c \beta f'\,e_3+ \left( \frac{c^2}{2} f'
 + g'\right)\xi_1 + f' \,\xi_2\right).$$
We denote by $s$ the arc-length of $c_u$ and calculate the derivative
$$\frac{d t_{c_u}}{d s}=\frac{t_{c_u}'}{s'} = \frac{\varepsilon (f'' g' - g'' f')}{(2 \varepsilon f' g')^2}
\left(c \alpha f'\,e_1 + c \beta f'\,e_3+ \left(\frac{c^2}{2} f'
 - g'\right)\xi_1 + f' \,\xi_2 \right).$$
Thus we obtain that the curvature of $c_u$ is ${ \frac{\varepsilon (f'' g' - g'' f')}{(2 \varepsilon f' g')^{\frac{3}{2}}}}$.
Finally, for each $v = const$ the
parametric lines $c_u$ are congruent in $\E^4_2$. These curves are the meridians of  $\mathcal{M}_m$.
We denote $\kappa_m(u) = { \frac{\varepsilon (f'' g' - g'' f')}{(2 \varepsilon f' g')^{\frac{3}{2}}}}$.

Now, we shall consider the parametric  $v$-lines of $\mathcal{M}_m$.  Let $u = u_0 = const$ and denote $a =f(u_0)$, $b = g(u_0)$.
The corresponding parametric  $v$-line  is given by
\begin{equation*} 
c_v: z(v) = a \varphi(v) \cosh v\,e_1 + a \varphi(v) \sinh v\,e_3+ \left( a\frac{\varphi^2(v)}{2} + b\right)\xi_1 + a \,\xi_2.
\end{equation*}
The unit tangent vector field $t_{c_v}$ of $c_v$ is
$$t_{c_v}=\frac{1}{\sqrt{\varepsilon (\dot{\varphi} ^2 - \varphi^2)}} \left((\dot{\varphi} \cosh v + \varphi \sinh v)\,e_1 + (\dot{\varphi} \sinh v + \varphi \cosh v)\,e_3+
 \varphi \dot{\varphi} \, \xi_1\right).$$
Knowing $t_{c_v}$  we calculate the curvature $\varkappa_{c_v}$ of $c_v$ and obtain that
$\varkappa_{c_v} = {\frac{\varphi \ddot{\varphi} - 2 \dot{\varphi}^2 + \varphi^2 }{ a(\varepsilon (\dot{\varphi} ^2 - \varphi^2))^{\frac{3}{2}}}}.$
We denote $\kappa(v) = {\frac{\varphi \ddot{\varphi} - 2 \dot{\varphi}^2 + \varphi^2 }{(\varepsilon (\dot{\varphi} ^2 - \varphi^2))^{\frac{3}{2}}}}$.
Then, for each  $u = u_0 = const$  the curvature of the corresponding parametric  $v$-line is expressed as
$\varkappa_{c_v} = {\frac{1}{a}\, \kappa(v)}$, where $a =f(u_0)$. Actually, 
$\kappa(v)$ is the curvature of the curve 
$$c: l = l(v) = \varphi(v) \cosh v \,e_1 + \varphi(v) \sinh v \,e_3+ \frac{\varphi^2(v)}{2} \,\xi_1 + \xi_2.$$

Consequently, formulas \eqref{E:Eq-9-par} take the form
\begin{equation} \label{E:Eq-10-par} 
\begin{array}{l}
\vspace{2mm}
h(X,X) = \kappa_m \,n_1; \\
\vspace{2mm}
h(X,Y) =  0; \\
\vspace{2mm}
h(Y,Y) =   \displaystyle{- \frac{1}{f}  \sqrt{\frac{\varepsilon f'}{2g'}}}\,n_1 - \displaystyle{\varepsilon \frac{\kappa}{f}} \, n_2.
\end{array}
\end{equation}
It follows from \eqref{E:Eq-10-par} that the Gauss curvature $K$ of the meridian surface $\mathcal{M}_m$ is expressed as
\begin{equation*} 
K = \varepsilon \frac{\kappa_m}{f}  \sqrt{\frac{\varepsilon f'}{2g'}} 
\end{equation*}
and the mean curvature vector field $H$ is given by
\begin{equation*} 
H = - \frac{\varepsilon}{2}\left( \kappa_m + \frac{1}{f} \sqrt{\frac{\varepsilon f'}{2g'}} \right) n_1 - \frac{\kappa}{2f}\, n_2.
\end{equation*}
Without loss of generality we can assume that $2 \varepsilon f' g' =1$, which implies  $\kappa_m = \frac{f''}{f'}$. Hence,
\begin{equation} \label{E:Eq-Kp}
K= \varepsilon \frac{f''}{f},
\end{equation}
\begin{equation} \label{E:Eq-Hp}
H = - \frac{\varepsilon (f f'' + (f')^2)}{2f f'}\,n_1 - \frac{\kappa}{2f}\, n_2.
\end{equation}

Now, using \eqref{E:Eq-7-par} and \eqref{E:Eq-10-par} we obtain that
\begin{equation}  \label{E:Eq-normal}
\begin{array}{ll}
\vspace{2mm}
\overline{\nabla}_X n_1 = \kappa_m \,X; & \qquad \overline{\nabla}_X n_2 = 0;\\
\vspace{2mm}
\overline{\nabla}_Y n_1 = \frac{1}{f}  \sqrt{\frac{\varepsilon f'}{2g'}} \,Y; & \qquad \overline{\nabla}_Y n_2 = -\varepsilon \frac{\kappa}{f} \,Y.
\end{array}
\end{equation}
Hence, 
\begin{equation}  \label{E:Eq-norm}
\begin{array}{ll}
\vspace{2mm}
D_X n_1 = 0; & \qquad D_X n_2 = 0;\\
\vspace{2mm}
D_Y n_1 = 0; & \qquad D_Y n_2 = 0,\\
\end{array}
\end{equation}
where $D$ is the  normal connection of the surface. The last equalities imply that the curvature of the normal connection of $\mathcal{M}_m$ is zero. 
So, we obtain the following statement.

\begin{proposition}
The meridian surface  $\mathcal{M}_m$, defined by \eqref{E:Eq-2-par}, is a surface with
flat normal connection.
\end{proposition}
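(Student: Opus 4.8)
The plan is to extract the normal connection $D$ of $\mathcal{M}_m$ explicitly in the pseudo-orthonormal normal frame $\{n_1,n_2\}$ of \eqref{E:Eq-7-par} and to observe that it vanishes identically; once this is in hand, flatness of the normal connection is immediate.

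To do this I would differentiate $n_1$ and $n_2$ along the tangent frame $\{X,Y\}$ using the \emph{flat} Levi-Civita connection $\overline{\nabla}$ of $\E^4_2$ --- so that the differentiation is simply componentwise in the base $e_1,e_3,\xi_1,\xi_2$ --- and then split each of $\overline{\nabla}_Xn_i,\overline{\nabla}_Yn_i$ into its tangential and normal parts via the Weingarten formula $\overline{\nabla}_Z\xi=-A_\xi Z+D_Z\xi$. This yields exactly \eqref{E:Eq-normal}: each of the four derivatives turns out to be a purely tangential multiple of $X$ or of $Y$, with no component along $n_1$ or $n_2$. Reading off the normal parts gives \eqref{E:Eq-norm}, namely $D_Xn_1=D_Xn_2=D_Yn_1=D_Yn_2=0$.

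Since $\{n_1,n_2\}$ is a global frame for the rank-two normal bundle and $D$ annihilates both sections, $D\equiv 0$; consequently the normal curvature operator
\[
R^D(X,Y)\eta=D_XD_Y\eta-D_YD_X\eta-D_{[X,Y]}\eta
\]
vanishes for every normal field $\eta$, so $\mathcal{M}_m$ has flat normal connection. The substantive part is the computation behind \eqref{E:Eq-normal}; here it is useful to note that, because $D$ is metric and the inner products $\langle n_1,n_1\rangle=\varepsilon$, $\langle n_2,n_2\rangle=-\varepsilon$, $\langle n_1,n_2\rangle=0$ are constant, the normal connection is encoded by a single $1$-form $\omega$ with $D_Zn_1=\omega(Z)\,n_2$ and $D_Zn_2=\omega(Z)\,n_1$. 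Thus it suffices to verify the one scalar $\omega$, e.g.\ that $\langle\overline{\nabla}_Xn_1,n_2\rangle=0$ and $\langle\overline{\nabla}_Yn_1,n_2\rangle=0$; I expect this short check --- carried out with the orthogonality relations among $e_1,e_3,\xi_1,\xi_2$ and the particular way $f,g$ enter $n_1$ and $\varphi$ enters $n_2$ --- to be the only real obstacle, the rest being formal.
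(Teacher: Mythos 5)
Your proposal is correct and follows essentially the same route as the paper: the paper likewise derives \eqref{E:Eq-normal} by ambient differentiation of $n_1,n_2$, reads off $D_Xn_i=D_Yn_i=0$ as in \eqref{E:Eq-norm}, and concludes that the normal connection is flat. Your additional observation that metric-compatibility reduces the check to the single scalar $\langle\overline{\nabla}_Zn_1,n_2\rangle$ is a valid (and slightly more economical) way to organize that computation.
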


\vskip 2mm
In the next sections we will give the classification of the meridian surfaces with constant Gauss curvature, with parallel mean curvature vector field and with parallel normalized mean curvature vector field.

\section{Meridian Surfaces  with constant Gauss curvature}

The study of surfaces with constant Gauss curvature is one of the
essential  topics in differential geometry. Surfaces
with constant Gauss curvature in Minkowski space have drawn the interest of many
geometers, see for example \cite{GalMarMil},  \cite{Lop}, and the references therein.

\vskip 2mm

Let  $\mathcal{M}_m$ be a meridian surface, defined by \eqref{E:Eq-2-par}. The  Gauss curvature of $\mathcal{M}_m$ depends only on the meridian curve $m$ and is expressed by  formula \eqref{E:Eq-Kp}. First, we shall describe the meridian surfaces with zero Gauss curvature.

\begin{theorem} \label{T:flat}
Let $\mathcal{M}_m$ be a meridian surface, defined by \eqref{E:Eq-2-par}.
Then $\mathcal{M}_m$ is flat if and only
if the meridian curve $m$ is given by
$$f(u) = a u + b; \quad g(u) = \frac{\varepsilon}{2a} u + c,$$
where  $a = const \neq 0$, $b=const$, $c = const$. In this case $\mathcal{M}_m$  is a developable ruled surface.
\end{theorem}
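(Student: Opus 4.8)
The plan is to extract the flatness condition directly from formula \eqref{E:Eq-Kp}, which was derived under the normalization $2\varepsilon f' g' = 1$ and reads $K = \varepsilon f''/f$. Since $f > 0$ on $I$ and $\varepsilon = \pm 1$, the condition $K \equiv 0$ is equivalent to the ODE $f'' \equiv 0$, so $f(u) = a u + b$ for constants $a, b$; the nondegeneracy requirement $f' g' \neq 0$ inherent in the construction forces $a = f' \neq 0$. Feeding $f' = a$ back into $2\varepsilon f' g' = 1$ gives $g'(u) = \frac{1}{2\varepsilon a} = \frac{\varepsilon}{2a}$ (using $\varepsilon^2 = 1$), hence $g(u) = \frac{\varepsilon}{2a}\, u + c$. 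That settles the ``only if'' part. For the converse I would simply substitute these $f, g$ into \eqref{E:Eq-Kp}: $f'' = 0$ yields $K = 0$, and one checks \emph{en passant} that $2\varepsilon f' g' = 1$ and the Lorentz sign condition $\varepsilon f' g' > 0$ are indeed satisfied.

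For the last sentence of the statement I would plug $f(u) = a u + b$ and $g(u) = \frac{\varepsilon}{2a}\, u + c$ into the parametrization \eqref{E:Eq-2-par}. With $l(v)$ as in the excerpt, the position vector becomes
$$z(u,v) = u\Bigl(a\, l(v) + \frac{\varepsilon}{2a}\,\xi_1\Bigr) + \Bigl(b\, l(v) + c\,\xi_1\Bigr),$$
which is affine in $u$; hence for each fixed $v$ the meridian $c_u$ is a straight line, and $\mathcal{M}_m$ is a ruled surface with the meridians as rulings. To see that it is developable, note that $z_u = a\, l(v) + \frac{\varepsilon}{2a}\,\xi_1$ does not depend on $u$ (constant ruling direction), that $z_{uu} = 0$, and that $z_{uv} = a\, \dot{l}(v)$ is proportional to $z_v = (a u + b)\, \dot{l}(v)$; therefore the tangent plane $\mathrm{span}\{z_u, z_v\}$ is the same at every point of a given ruling, which is the defining property of a developable ruled surface. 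Alternatively one may appeal to \eqref{E:Eq-10-par}: when $f'' = 0$ one has $\kappa_m = f''/f' = 0$, so $h(X,X) = h(X,Y) = 0$, i.e. the ruling direction is a totally geodesic direction of $\mathcal{M}_m$, which again yields the developable structure.

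There is no genuine obstacle in this argument — the essential content is the trivial ODE $f'' = 0$ followed by an affine substitution. The only points deserving a sentence of care are verifying that $a \neq 0$ is actually forced (so that the expression for $g'$ makes sense and remains compatible with $\varepsilon f' g' > 0$), and pinning down what ``developable'' should mean in codimension two — for which the cleanest reading is the constancy of the tangent plane along each ruling, as recorded above.
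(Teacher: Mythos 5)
Your proof is correct and follows essentially the same route as the paper: the equivalence is read off directly from $K=\varepsilon f''/f$ together with the normalization $2\varepsilon f'g'=1$, giving $f''=0$ and hence the stated linear $f$ and $g$. The only cosmetic difference is in the developability claim, where you verify by direct computation that the tangent plane $\mathrm{span}\{z_u,z_v\}$ is constant along each straight ruling (via $z_u$ independent of $u$ and $z_{uv}$ proportional to $z_v$), whereas the paper deduces the same constancy from $\overline{\nabla}_X n_1=\overline{\nabla}_X n_2=0$, i.e.\ from the normal side; the two arguments are equivalent.
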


\noindent {\it Proof:} 
It follows from \eqref{E:Eq-Kp} that $K =0$ if and only if $f(u) =  a u + b$, $a = const \neq 0$, $b=const$. Using that $2 \varepsilon f' g' =1$, we obtain $g(u) = \frac{\varepsilon}{2a} u + c$, $c = const$. Since in this case $\kappa_m = 0$, then the meridian curve $m$ is part of a straight line, i.e. $\mathcal{M}_m$ lies on  a ruled surface.  
Moreover, it follows from \eqref{E:Eq-normal} that $\overline{\nabla}_X n_1 = 0; \; \overline{\nabla}_X n_2 = 0$, which implies that the normal space is constant at the points of a fixed straight line, and hence the tangent space is one and the same at the points of a fixed line. Consequently, 
$\mathcal{M}_m$  is part of a developable ruled surface.

\qed

The following theorem describes the  meridian surfaces with constant non-zero Gauss curvature.

\begin{theorem} \label{T:Gauss curvature}
Let $\mathcal{M}_m$ be a meridian surface,  defined by \eqref{E:Eq-2-par}.
Then $\mathcal{M}_m$ has constant non-zero Gauss curvature $K$ if and only
if the meridian curve $m$ is given by
\begin{equation} \label{E:Eq-Th}
\begin{array}{ll}
\vspace{2mm}
f(u) = \alpha \cosh \sqrt{\varepsilon K} u + \beta \sinh \sqrt{\varepsilon K} u, & \textrm{if} \quad \varepsilon K >0;\\
\vspace{2mm} f(u) = \alpha \cos \sqrt{- \varepsilon K} u + \beta \sin
\sqrt{- \varepsilon K} u, & \textrm{if} \quad \varepsilon K <0,
\end{array}
\end{equation}
where $\alpha$ and $\beta$ are constants, $g(u)$ is defined by $g'(u) = \frac{\varepsilon}{2f'(u)}$.
\end{theorem}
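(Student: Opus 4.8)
The plan is to reduce the constant Gauss curvature condition to a linear second-order ODE for $f$ and to integrate it explicitly. By \eqref{E:Eq-Kp} we have $K = \varepsilon f''/f$, so $K$ being a non-zero constant is equivalent to the differential equation $f''(u) - \varepsilon K\, f(u) = 0$ on $I$, together with $f \not\equiv 0$. This is a constant-coefficient linear ODE whose general solution splits according to the sign of $\varepsilon K$: if $\varepsilon K > 0$ the characteristic roots are $\pm\sqrt{\varepsilon K}$ and $f$ is a linear combination of $\cosh\sqrt{\varepsilon K}\,u$ and $\sinh\sqrt{\varepsilon K}\,u$; if $\varepsilon K < 0$ the roots are $\pm i\sqrt{-\varepsilon K}$ and $f$ is a linear combination of $\cos\sqrt{-\varepsilon K}\,u$ and $\sin\sqrt{-\varepsilon K}\,u$. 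These are precisely the two possibilities listed in \eqref{E:Eq-Th}, with $\alpha,\beta$ the two integration constants.

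Next I would recover $g$. Recall that just before \eqref{E:Eq-Kp} we normalized $2\varepsilon f'g' = 1$, hence $g'(u) = \varepsilon/(2f'(u))$, and integrating this determines $g$ up to an additive constant; this is exactly the relation appearing in the statement. Here one must observe that $f'$ is nowhere zero on $I$ --- which is automatic, since $E = -2f'g' = -\varepsilon \neq 0$ on a Lorentz surface --- and that $f > 0$ on $I$; in the trigonometric case $\varepsilon K < 0$ these conditions simply force the interval $I$ to avoid the zeros of $f$ and of $f'$, and constrain $\alpha,\beta$, but do not change the form of the solution.

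For the converse direction, I would substitute the functions $f$ from \eqref{E:Eq-Th}, together with any $g$ obeying $g' = \varepsilon/(2f')$, back into \eqref{E:Eq-Kp}. By construction $f'' = \varepsilon K f$, so \eqref{E:Eq-Kp} yields Gauss curvature identically equal to $K$; moreover $K \neq 0$ since $\alpha$ and $\beta$ are not both zero (otherwise $f \equiv 0$, which is excluded). This establishes the equivalence.

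I do not expect a genuine obstacle here: the core is solving an elementary ODE. The only steps needing a little care are the sign bookkeeping between the two regimes $\varepsilon K > 0$ and $\varepsilon K < 0$, and making the domain hypotheses ($f > 0$, $f' \neq 0$) explicit so that $g$ is well defined and $\mathcal{M}_m$ is genuinely Lorentzian --- these I would state but not dwell on.
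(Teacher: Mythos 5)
Your proposal is correct and follows essentially the same route as the paper: reduce \eqref{E:Eq-Kp} to the linear ODE $f'' - \varepsilon K f = 0$, write its general solution in the two regimes of the sign of $\varepsilon K$, and recover $g$ from the normalization $2\varepsilon f'g' = 1$. Your additional remarks on the domain conditions ($f>0$, $f'\neq 0$) and the explicit converse are sensible refinements the paper leaves implicit.
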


\noindent {\it Proof:} Using that the Gauss curvature is expressed by  \eqref{E:Eq-Kp}, we obtain that  $K = const \neq 0$  if and
only if the function $f(u)$ satisfies the following differential
equation
$$f''(u) - \varepsilon K f(u) = 0.$$
The general solution of this equation is given by \eqref{E:Eq-Th}, 
where $\alpha$ and $\beta$ are constants.
Since we assume that $2 \varepsilon f' g' =1$, then the function $g(u)$ is
determined by $g'(u) = \frac{\varepsilon}{2f'(u)}$.

\qed

\section{Meridian surfaces  with parallel mean curvature vector field}

Another basic class of surfaces in Riemannian and pseudo-Riemannian geometry are surfaces with parallel mean curvature vector field, since they are critical points of some functionals and  play important role in differential geometry,  the theory of harmonic maps, as well as in physics.
The classification of surfaces with parallel mean curvature vector field in Riemannian space forms was given by Chen \cite{Chen1} and Yau  \cite{Yau}. Recently, spacelike surfaces with parallel mean
curvature vector field  in pseudo-Euclidean spaces with arbitrary codimension  were classified in \cite{Chen1-2} and  \cite{Chen1-3}. The classification of quasi-minimal surfaces with parallel mean
curvature vector in  $\E^4_2$ is given
in \cite{Chen-Garay}.
Lorentz surfaces with parallel mean curvature vector field in arbitrary pseudo-Euclidean space $\E^m_s$ are studied in \cite{Chen-KJM} and \cite{Fu-Hou}. A nice survey on classical and recent results on
submanifolds with parallel mean curvature vector in Riemannian manifolds
as well as in pseudo-Riemannian manifolds is presented in \cite{Chen-survey}.

\vskip 2mm
In this section we shall describe the meridian surfaces with non-zero parallel mean curvature vector field, i.e. $H \neq 0$ and $DH =0$.

\vskip 2mm
Under the assumption $2 \varepsilon f' g' =1$ the mean curvature vector field $H$ of the meridian surface $\mathcal{M}_m$  is given by formula \eqref{E:Eq-Hp}. 
Using that $D_X n_1 = D_Y n_1 = D_X n_2 = D_Y n_2 = 0$, and $X = z_u$,  $Y =  \frac{z_v}{f \sqrt{\varepsilon (\dot{\varphi}^2 - \varphi^2)}}$,  we get
\begin{equation} \label{E:Eq-6p}
\begin{array}{l}
\vspace{2mm}
D_X H =  - \frac{\varepsilon}{2} \left(\frac{f f''+(f')^2}{f f'} \right)' n_1 + \frac{\kappa f'}{2f^2} \,n_2;\\
\vspace{2mm}
D_Y H =  -  \frac{\kappa'}{2f^2 \sqrt{\varepsilon (\dot{\varphi}^2 - \varphi^2)}} \,n_2.
\end{array}
\end{equation}

\begin{theorem} \label{T:parallel-H-p}
Let  $\mathcal{M}_m$ be a meridian surface, defined by \eqref{E:Eq-2-par}. Then
$\mathcal{M}_m$  has parallel mean curvature vector field if and
only if the curvature of $c$ is
$\kappa = 0$ and the meridian curve $m$ is determined by $f' = \phi(f)$ where
\begin{equation*} 
\phi(t) = \frac{a\, t^2 + b}{2 t}, \quad a = const \neq 0, \quad b = const,
\end{equation*}
$g(u)$ is defined by  $g'(u) = \frac{\varepsilon}{2f'(u)}$. In this case  $\mathcal{M}_m$ is a non-flat CMC surface lying in a  hyperplane of $\E^4_2$.
\end{theorem}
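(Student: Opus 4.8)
The plan is to start from formulas \eqref{E:Eq-6p} for $D_X H$ and $D_Y H$ and set both equal to zero. Since $\{n_1, n_2\}$ is a basis of the normal bundle and these expressions are written in terms of this basis, the condition $DH = 0$ is equivalent to the vanishing of all four coefficients. From $D_X H = 0$ we get two equations: $\left(\frac{f f'' + (f')^2}{f f'}\right)' = 0$ and $\frac{\kappa f'}{2 f^2} = 0$; from $D_Y H = 0$ we get $\frac{\kappa'}{2 f^2 \sqrt{\varepsilon(\dot\varphi^2 - \varphi^2)}} = 0$, i.e. $\kappa' = 0$. Since $f' \neq 0$ (it appears in denominators throughout, reflecting that $E = -2f'g' \neq 0$), the $n_2$-coefficient of $D_X H$ forces $\kappa = 0$; this automatically takes care of $\kappa' = 0$ as well. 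So the first key step is: $DH = 0$ reduces to $\kappa = 0$ together with the single ODE $\left(\frac{f f'' + (f')^2}{f f'}\right)' = 0$ for the meridian curve.

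The second step is to solve this ODE. Integrating once gives $\frac{f f'' + (f')^2}{f f'} = a = \text{const}$, and I need $a \neq 0$ (the case $a = 0$ should be excluded because it leads to $\kappa_m = -\frac{1}{f}\sqrt{\varepsilon f'/2g'}$ forcing $K = 0$ via \eqref{E:Eq-Kp}, i.e. a flat surface, contradicting the "non-flat" conclusion; I would check this carefully). Now I rewrite the ODE using the substitution $f' = \phi(f)$, so that $f'' = \phi'(f)\, f' = \phi \phi'$ — the standard reduction of order. Then $f f'' + (f')^2 = f\phi\phi' + \phi^2 = \phi(f\phi' + \phi) = \phi\,(f\phi)'$. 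The equation becomes $\frac{\phi (f\phi)'}{f\phi} = a$, i.e. $(f\phi)'/(f\phi) = a/\phi$... which is not yet separable in an obvious way, so instead I note $f f'' + (f')^2 = (f f')' $ directly and actually $\frac{(ff')'}{ff'} = a$ integrates immediately: $ff' = C e^{au}$? That conflicts with the stated answer, so the right move is to treat $f$ as the independent variable. Writing $p = f' = \phi(f)$ and using $ff'' + (f')^2 = f p \frac{dp}{df} + p^2 = \tfrac{1}{2} f \frac{d(p^2)}{df} + p^2$, and $ff' = fp$, the equation $\tfrac{1}{2} f (p^2)' + p^2 = a f p$ is still awkward; the cleanest path is to observe $ff'' + (f')^2 = (ff')'$ and $\frac{d}{du} = f'\frac{d}{df} = \phi\frac{d}{df}$, so $(ff')' = \phi \frac{d}{df}(f\phi)$ and $ff' = f\phi$, giving $\phi \frac{(f\phi)'_f}{f\phi} = a$, hence $\frac{(f\phi)'_f}{f\phi} = \frac{a}{\phi}$. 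I expect after the correct manipulation that one arrives at $\phi(t) = \frac{a t^2 + b}{2t}$; I would verify this candidate directly by plugging back in rather than belaboring the integration, since verification is short: with $\phi = \frac{at^2+b}{2t}$ one computes $f\phi$, its $f$-derivative, and checks the ratio is constant.

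The third step is the geometric conclusion. With $\kappa = 0$, I recall that $\kappa(v)$ is the curvature of the curve $c: l = l(v)$ lying in (a translate of) the $3$-dimensional subspace $\span\{e_1, e_3, \xi_1\}$ through the point determined by $\xi_2$; a curve of zero curvature is a straight line, so $l(v)$ parametrizes a line, and therefore by \eqref{E:Eq-2-par} the whole surface $z(u,v) = f(u) l(v) + g(u)\xi_1$ lies in the hyperplane $\span\{l'(v_0), \xi_1, \xi_2\}$ — more carefully, since $l(v)$ runs along a fixed line with direction vector $w$ and base point $p$ (so $l(v) = p + \lambda(v) w$ for a scalar function $\lambda$), the image of $z$ lies in the affine $3$-space spanned by $\{w, \xi_1, p\}$... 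I need to confirm this spans only a hyperplane, i.e. that $w, \xi_1, \xi_2$ (note $p$ has a $\xi_2$-component) are linearly independent but span a $3$-dimensional subspace, which they do. Hence $\mathcal{M}_m$ lies in a hyperplane of $\E^4_2$. Finally, "non-flat": from \eqref{E:Eq-Kp}, $K = \varepsilon f''/f$, and with $f' = \phi(f) = \frac{af^2+b}{2f}$ one gets $f'' = \phi'(f)\phi(f) = \frac{1}{2}(a - b/f^2)\cdot\frac{af^2+b}{2f}$, which is not identically zero when $a \neq 0$ (it would vanish identically only in degenerate cases one rules out), so $K \not\equiv 0$; and "CMC" follows because in a hyperplane the surface has a well-defined mean curvature and one checks $\langle H, H\rangle$ is constant using \eqref{E:Eq-Hp} with $\kappa = 0$ and the explicit form of $f$ — this is the computation $\langle H, H \rangle = \varepsilon \left(\frac{ff'' + (f')^2}{2ff'}\right)^2 \langle n_1, n_1\rangle = \frac{\varepsilon a^2}{4}$, constant.

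\textbf{Main obstacle.} The one genuinely delicate point is the integration of $\left(\frac{ff'' + (f')^2}{ff'}\right)' = 0$ and its correct re-expression as $f' = \phi(f)$ with $\phi(t) = \frac{at^2+b}{2t}$; getting the reduction of order right (choosing $f$ as the independent variable and tracking the constants of integration, and explaining why $a \neq 0$) is where care is needed. The geometric steps (line $\Rightarrow$ hyperplane, and verifying non-flat and CMC) are routine once $\kappa = 0$ and the form of $f$ are in hand.
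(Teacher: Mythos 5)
Your reduction of $DH=0$ to the three scalar conditions, the deduction $\kappa=0$ plus $\frac{ff''+(f')^2}{ff'}=a$, and the integration of the ODE via $f'=\phi(f)$ to $\phi(t)=\frac{at^2+b}{2t}$ all coincide with the paper's proof (your worry that $ff'=Ce^{au}$ ``conflicts with the stated answer'' is unfounded --- it is the same one-parameter family written explicitly in $u$). However, there are two genuine gaps. First, and most seriously, your route to ``$\mathcal{M}_m$ lies in a hyperplane'' rests on the claim that $\kappa=0$ makes $c$ a straight line. That is false here: $\kappa$ is the \emph{norm} of the curvature vector of $c$, and that vector has a nonvanishing $n_1$-component (note $\langle z_{vv},n_1\rangle=-f\sqrt{\varepsilon f'/2g'}\,(\dot\varphi^2-\varphi^2)\neq 0$ independently of $\kappa$), which under the normalization $2\varepsilon f'g'=1$ exactly cancels the tangential $X$-component in norm. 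So when $\kappa=0$ the curvature vector of $c$ is a nonzero \emph{lightlike} vector proportional to $\xi_1$; solving $\varphi\ddot\varphi-2\dot\varphi^2+\varphi^2=0$ (e.g. $\varphi=1/\cosh v$) shows $c$ is a parabola with axis direction $\xi_1$, never a line. Your conclusion happens to survive because this parabola is planar with $\xi_1$ in its plane, so $z=f\,l+g\,\xi_1$ still spans only a $3$-space, but the argument as written fails. The paper's proof avoids this entirely: $\kappa=0$ makes $\overline{\nabla}_Xn_2=\overline{\nabla}_Yn_2=0$ by \eqref{E:Eq-normal}, so $n_2$ is a constant vector of $\E^4_2$ and the surface lies in the nondegenerate hyperplane orthogonal to it.

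Second, your reason for discarding $a=0$ is wrong. If $a=0$ then $ff''+(f')^2=0$, i.e. $ff'=const\neq 0$, whence $f''=-(f')^2/f\neq 0$ and $K=\varepsilon f''/f\neq 0$ by \eqref{E:Eq-Kp}: the surface is \emph{not} flat. What actually happens is that $H=0$ by \eqref{E:Eq-Hp} (with $\kappa=0$), i.e. the surface is minimal and satisfies $DH=0$ trivially; the case is excluded because the theorem concerns surfaces with \emph{non-zero} parallel mean curvature vector, as the paper states at the start of the section. (Using the ``non-flat'' conclusion of the theorem to rule out a case in its own proof would in any event be circular.) The remaining steps --- $H=-\frac{\varepsilon a}{2}n_1$, $\langle H,H\rangle=\frac{\varepsilon a^2}{4}$, hence CMC, and $f''=\frac{a^2f^4-b^2}{4f^3}\not\equiv 0$, hence non-flat --- match the paper.
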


\begin{proof}
 Using formulas
\eqref{E:Eq-6p} we get that $\mathcal{M}_m$  has  parallel mean curvature vector field if and only if the following conditions hold
\begin{equation} \label{E:Eq-7p}
\begin{array}{l}
\vspace{2mm}
\left(\frac{f f''+(f')^2 }{f f'} \right)' =0;\\
 \vspace{2mm}
\kappa f' =0;\\
\vspace{2mm}
  \kappa' =0.\\
\end{array}
\end{equation}
Since $f' \neq 0$, the equalities \eqref{E:Eq-7p}  imply that 
$\kappa = 0$ and $\frac{f f''+(f')^2}{f f'}  = a = const$.
If $a = 0$, then $H=0$, i.e. $\mathcal{M}_m$ is minimal. Since we consider non-minimal surfaces, we assume that $a \neq 0$.
In this case the meridian curve $m$ is determined by the following differential
equation:
\begin{equation} \label{E:Eq-8p}
f f''+(f')^2 = a f f', \qquad a = const \neq 0.
\end{equation}
The solutions of the last  differential equation  can be found as follows.
Setting $f' = \phi (f)$ in equation \eqref{E:Eq-8p}, we obtain
that the function $\phi  = \phi (t)$ is a solution of the equation
\begin{equation} \label{E:Eq-9p}
\phi' + \frac{1}{t}\, \phi = a.
\end{equation}
The general solution of equation \eqref{E:Eq-9p} is given by
\begin{equation} \label{E:Eq-90p}
\phi(t) = \frac{a\, t^2 + b}{2 t},  \quad b = const.
\end{equation}

In this case, the mean curvature vector field $H$ is given by $H = -\frac{\varepsilon a}{2} \, n_1$, and thus $\langle H, H \rangle = \frac{\varepsilon a^2}{4} = const$. Hence, the surface $\mathcal{M}_m$  is a CMC surface. Moreover, since $\kappa =0$, from \eqref{E:Eq-normal} it follows that $\overline{\nabla}_X n_2 = 0$, $\overline{\nabla}_Y n_2 = 0$. 
Hence, $\mathcal{M}_m$ lies in a 3-dimensional constant hyperplane  parallel to   $span \{X,Y,n_1\}$. The Gauss curvature  $K \neq 0$, so $\mathcal{M}_m$ is a non-flat CMC surface lying in a hyperplane  of $\E^4_2$. 

\vskip 1mm
Conversely, if the meridian curve $m$ is determined by \eqref{E:Eq-90p}, then by direct computation we get that
$D_X H = D_Y H = 0$, i.e. the surface has parallel mean curvature vector field.

\end{proof}

Theorem \ref{T:parallel-H-p} shows that each meridian surface with parallel mean curvature vector field is  a CMC surface and lies in a hyperplane of $\E^4_2$. So, we have the following result.

\begin{corollary}
There are no Lorentz meridian surfaces with parallel mean curvature vector field other than CMC surfaces lying in a hyperplane of $\E^4_2$.
\end{corollary}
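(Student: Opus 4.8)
The plan is to derive the statement directly from Theorem \ref{T:parallel-H-p}, which already classifies completely the meridian surfaces $\mathcal{M}_m$ with $DH = 0$. All that remains is to extract from that classification the two geometric features claimed here: that every such surface has constant nonzero $\langle H, H\rangle$, and that it is contained in a hyperplane of $\E^4_2$.

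For the first point I would substitute the conditions produced by Theorem \ref{T:parallel-H-p} --- namely $\kappa = 0$ together with $ff'' + (f')^2 = a f f'$ from \eqref{E:Eq-8p}, $a = const \neq 0$ --- into the expression \eqref{E:Eq-Hp} for $H$. Since $\frac{ff''+(f')^2}{ff'} = a$ and the $n_2$--term drops out, this collapses to $H = -\frac{\varepsilon a}{2}\,n_1$, whence $\langle H, H\rangle = \frac{\varepsilon a^2}{4}$ is a nonzero constant; so $\mathcal{M}_m$ is a CMC surface in the sense of the Preliminaries. (The degenerate value $a = 0$ is precisely the minimal case $H = 0$, which is excluded.)

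For the second point I would feed $\kappa = 0$ into the Weingarten-type relations \eqref{E:Eq-normal}: they yield $\overline{\nabla}_X n_2 = \overline{\nabla}_Y n_2 = 0$, so $n_2$ is a constant vector field along $\mathcal{M}_m$. Because $\langle n_2, n_2\rangle = -\varepsilon \neq 0$, this fixed direction is non-degenerate, and $\langle z(u,v) - z(u_0,v_0),\, n_2\rangle$ vanishes identically; hence $\mathcal{M}_m$ lies in the affine hyperplane through one of its points orthogonal to $n_2$, i.e.\ the hyperplane parallel to $\span\{X, Y, n_1\}$. Together with the fact (also recorded in Theorem \ref{T:parallel-H-p}) that $K = \varepsilon f''/f \neq 0$ in this situation, this gives the corollary. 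I do not expect any genuine obstacle: the entire analytic content sits in the proof of Theorem \ref{T:parallel-H-p}, and the only step worth stating carefully is the observation that $n_2$ is a non-null direction, which is what makes ``lies in a hyperplane'' --- rather than in a degenerate affine subspace --- the correct conclusion.
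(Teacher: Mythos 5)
Your proposal is correct and follows essentially the same route as the paper: the corollary is read off directly from Theorem \ref{T:parallel-H-p}, whose proof already establishes both that $H=-\frac{\varepsilon a}{2}\,n_1$ gives $\langle H,H\rangle=\frac{\varepsilon a^2}{4}=const\neq 0$ and that $\kappa=0$ forces $\overline{\nabla}_Xn_2=\overline{\nabla}_Yn_2=0$, so $\mathcal{M}_m$ lies in a hyperplane parallel to $\span\{X,Y,n_1\}$. Your extra remark that $n_2$ is non-null, so the hyperplane is non-degenerate, is a small but welcome clarification that the paper leaves implicit.
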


\vskip 2mm
\textit{Remark}. The same result holds true for meridian surfaces lying on rotational hypersurfaces with spacelike or timelike axis \cite{BM2}.

\section{Meridian surfaces  with parallel normalized  mean curvature vector field}

The class of surfaces with parallel mean curvature vector field is naturally extended to the class of surfaces with parallel
normalized mean curvature vector field.
A submanifold in a Riemannian manifold is said to have parallel
normalized mean curvature vector field  if the mean curvature vector is non-zero and the
unit vector in the direction of the mean curvature vector is parallel in the normal
bundle  \cite{Chen-MM}.
It is well known that submanifolds with non-zero parallel mean curvature vector field  have parallel normalized mean curvature vector field.
 But the condition to have parallel normalized mean curvature vector field is much weaker than the condition to have parallel mean curvature vector field.
For example, every surface in the Euclidean 3-space has parallel normalized mean
curvature vector field but in the 4-dimensional Euclidean space, there exist abundant examples of surfaces
which lie fully in $\E^4$ with parallel normalized mean
curvature vector field, but not with parallel mean curvature vector field.
In the pseudo-Euclidean space with neutral metric $\E^4_2$ the study of Lorentz surfaces with  parallel normalized mean
curvature vector field, but not  parallel mean curvature vector field, is still an open problem.

\vskip 2mm
In this section we give the classification of all meridian surfaces  which have parallel normalized  mean curvature vector field but not parallel $H$.

Let  $\mathcal{M}_m$ be a meridian surface, defined by \eqref{E:Eq-2-par}. The mean curvature vector field $H$  is given by formula  \eqref{E:Eq-Hp}. We assume that $\langle H,H \rangle \neq 0$, i.e. $(f f''+(f')^2)^2 - \kappa^2 f'^2 \neq 0$. 

If  $\kappa = 0$, then the  normalized mean curvature vector field is $H_0 = n_1$ and in view of \eqref{E:Eq-norm} we have $D_X H_0 = D_Y H_0 =0$, i.e. $H_0$ is parallel in the normal bundle. We  consider this case as trivial, since under the assumption $\kappa =0$ the surface $\mathcal{M}_m$ lies in a 3-dimensional hyperplane of $\E^4_2$ and every surface in 3-dimensional space has parallel normalized mean curvature vector field. So, further we assume that $\kappa \neq 0$.

A unit   normal vecor field in the direction of $H$ is 
\begin{equation} \label{E:Eq-H0-a}
H_0 =  \frac{- 1}{\sqrt{\left|(f f''+(f')^2)^2 - \kappa^2 f'^2\right|}} \left((f f''+(f')^2) \,n_1  +  \kappa f' \,n_2 \right).
\end{equation}

For simplicity we denote 
$$A = \frac{-(f f''+(f')^2 )}{\sqrt{\left|(f f''+(f')^2 )^2 - \kappa^2 f'^2\right|}}, \quad 
  B = \frac{-\kappa f'}{\sqrt{\left|(f f''+(f')^2 )^2 - \kappa^2 f'^2\right|}},$$
so, the normalized mean  curvature vector field is  expressed as $H_0 = A\, n_1 + B \, n_2$. 
Then from  equalities \eqref{E:Eq-H0-a} and \eqref{E:Eq-norm} we get
\begin{equation} \label{E:Eq-11}
\begin{array}{l}
\vspace{2mm}
D_X H_0 = x(A)\,n_1 + x(B) \,n_2;\\
\vspace{2mm}
D_Y H_0 = y(A)\,n_1 + y(B) \,n_2.
\end{array}
\end{equation}

\begin{theorem} \label{T:parallel-H0-p}
Let  $\mathcal{M}_m$ be a meridian surface,  defined by \eqref{E:Eq-2-par}. Then
$\mathcal{M}_m$  has parallel normalized mean curvature vector field but not parallel mean curvature vector if and
only if one of the following cases holds:

\hskip 10mm (i) $\kappa \neq 0$ and the  meridian  curve $m$ is defined by
$$f(u) = \sqrt{au+b}, \quad g(u) = \frac{2}{3 a^2} (au +b)^{\frac{3}{2}} + c,$$ 
where $a = const \neq 0$, $b=const$, $c = const$.

\hskip 10mm (ii) $\kappa = const \neq 0$ and the meridian curve $m$ is determined by $f' = \phi(f)$ where
\begin{equation} \notag
\phi(t) = \frac{c\, t +b}{t}, \quad c = const \neq 0,\; c^2 \neq \kappa^2, \; b = const,
\end{equation}
$g(u)$ is defined by $g'(u) = \frac{\varepsilon}{2f'(u)}$. 
\end{theorem}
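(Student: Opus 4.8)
The plan is to read the two identities \eqref{E:Eq-11} as saying that $\mathcal{M}_m$ has parallel normalized mean curvature vector field exactly when $x(A)=x(B)=y(A)=y(B)=0$. One first notices that $\langle H_0,H_0\rangle=\varepsilon(A^2-B^2)$, with $A^2-B^2=\mathrm{sign}\big((ff''+(f')^2)^2-\kappa^2f'^2\big)$ a locally constant $\pm1$, so the vector $H_0^{\perp}:=B\,n_1+A\,n_2$ is orthogonal to $H_0$ and spans the orthogonal complement of $H_0$ in the (Lorentzian) normal plane. Since $D_XH_0$ and $D_YH_0$ are automatically orthogonal to $H_0$, and since $B\neq0$ (its numerator $-\kappa f'$ is nonzero and, under the standing assumption $\langle H,H\rangle\neq0$, its denominator is positive), one has $D_XH_0=\frac{x(A)}{B}\,H_0^{\perp}$ and $D_YH_0=\frac{y(A)}{B}\,H_0^{\perp}$. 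Hence $DH_0=0$ if and only if $x(A)=y(A)=0$, and as $X,Y$ are nowhere-vanishing multiples of $\partial_u,\partial_v$ this says precisely that $A$ is constant on $\mathcal{M}_m$.

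To analyse this I would set $P(u)=ff''+(f')^2=(ff')'$ and $Q(u)=f'(u)$, so that $A=-P/\sqrt{|P^2-\kappa^2Q^2|}$ and $B=-\kappa Q/\sqrt{|P^2-\kappa^2Q^2|}$ with $\kappa=\kappa(v)$, and differentiate, carrying the sign $\sigma=\mathrm{sign}(P^2-\kappa^2Q^2)$ through. After the cross terms cancel, $\partial_uA$ and $\partial_uB$ turn out to be scalar multiples of $P'Q-PQ'=Q^2(P/Q)'$, while $\partial_vA$ and $\partial_vB$ are scalar multiples of $\kappa'P$. Therefore $A$ is constant precisely when $(P/Q)'=0$ and $\kappa'P=0$; since $Q\neq0$ and $\kappa\neq0$, this splits into two cases: either $P\equiv0$ (with $\kappa$ otherwise arbitrary), or $P\not\equiv0$ together with $\kappa=\mathrm{const}\neq0$ and $P/Q=c=\mathrm{const}$, where necessarily $c\neq0$ because $c=0$ forces $P\equiv0$.

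In the first case $ff''+(f')^2=0$, i.e. $(f^2)''=0$, so $f(u)=\sqrt{au+b}$ with $a\neq0$; then $H_0=\pm n_2$, which is parallel by \eqref{E:Eq-norm}, and $g$ is determined by $g'=\varepsilon/(2f')$, giving the expression in case (i). In the second case, substituting $f'=\phi(f)$ into $ff''+(f')^2=cf'$ turns it into the linear equation $t\phi'(t)+\phi(t)=c$, whose general solution is $\phi(t)=(ct+b)/t$; together with $g'=\varepsilon/(2f')$ this is case (ii), where the standing hypothesis $\langle H,H\rangle\neq0$ forces $c^2\neq\kappa^2$. To see that neither family has parallel $H$, I would compute $\langle H,H\rangle=\frac{\varepsilon}{4f^2}\big((P/Q)^2-\kappa^2\big)$, which equals $-\varepsilon\kappa^2/(4f^2)$ in case (i) and $\varepsilon(c^2-\kappa^2)/(4f^2)$ in case (ii); both are non-constant (as $f$ is non-constant, and $c^2\neq\kappa^2$ in case (ii)), hence $DH\neq0$. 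The converse implications follow by reversing these computations, or by substituting the explicit meridian curves directly into \eqref{E:Eq-11}.

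The step I expect to be the main obstacle is the differentiation of $A$ and $B$: the absolute value under the square root forces one to keep track of $\sigma=\mathrm{sign}(P^2-\kappa^2Q^2)$, and the clean outcome — that $\partial_uA,\partial_uB$ collapse to multiples of $(P/Q)'$ and $\partial_vA,\partial_vB$ to multiples of $\kappa'$ — only appears after several terms cancel. Organising the computation via the relation $A^2-B^2=\mathrm{const}$ (which gives $A\,\partial_uA=B\,\partial_uB$ and $A\,\partial_vA=B\,\partial_vB$) is what keeps it manageable.
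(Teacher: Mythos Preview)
Your argument is correct and reaches the same two cases as the paper, but the route differs in two places. First, for the forward implication the paper simply reads off from \eqref{E:Eq-11} that $DH_0=0$ forces both $A$ and $B$ to be constants $\alpha,\beta$, and then takes the quotient $\alpha/\beta$ to obtain
\[
\frac{\alpha}{\beta}\,\kappa(v)=\frac{ff''+(f')^2}{f'}(u),
\]
a separated equation that immediately yields $\kappa=\mathrm{const}$ and $P/Q=c=\mathrm{const}$ (or $P\equiv0$). You instead use the unit constraint $A^2-B^2=\pm1$ to reduce to the single condition that $A$ be constant, and then differentiate $A$ explicitly in $u$ and $v$ to recover $(P/Q)'=0$ and $\kappa'P=0$. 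Your reduction via $H_0^{\perp}$ is neat, but the price is the very computation you flag as the main obstacle; the paper's ratio trick sidesteps that differentiation entirely. Second, to show $H$ is not parallel the paper substitutes the two families directly into \eqref{E:Eq-6p} and exhibits a nonzero component of $D_XH$, whereas you argue that $\langle H,H\rangle=\varepsilon\big((P/Q)^2-\kappa^2\big)/(4f^2)$ is nonconstant in $u$. Your argument here is actually the cleaner of the two, since it handles both cases at once and avoids recomputing $DH$.
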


\begin{proof}
Let $\mathcal{M}_m$ be a surface with parallel normalized mean curvature vector field, i.e. $D_X H_0 = 0$, $D_Y H_0 =0$. 
Then from  \eqref{E:Eq-11} it follows that  $A=const$, $B=const$. Hence, 
\begin{equation} \label{E:Eq-12}
\begin{array}{l}
\vspace{2mm}
\frac{-(f f''+(f')^2)}{\sqrt{\left|(f f''+(f')^2)^2 - \kappa^2 f'^2\right|}} = \alpha=const;\\
\frac{-\kappa f'}{\sqrt{\left|(f f''+(f')^2)^2 - \kappa^2 f'^2\right|}} = \beta=const.
\end{array}
\end{equation}

We have the following two cases.

\vskip 1mm
Case (i):  $f f''+(f')^2=0$. In this case, from \eqref{E:Eq-Hp} we get that the mean curvature vector field is $H = - \frac{\kappa}{2f} \,n_2$ and the normalized mean curvature vector field is $H_0 = n_2$. Since we study surfaces with $\langle H,H \rangle \neq 0$, we get $\kappa \neq 0$. The solution of the differential equation $f f''+(f')^2 =0$ is given by the formula 
$f(u) = \sqrt{au+b}$, where $a=const \neq 0$, $b =const$. Using that $g'(u) = \frac{\varepsilon}{2f'(u)}$, we obtain 
$g(u) = \frac{2}{3 a^2} (au +b)^{\frac{3}{2}} + c$, where $c = const$.

\vskip 1mm
Case (ii):  $f f''+(f')^2 \neq 0$ in an interval $\tilde{I} \subset I \subset \mathbb{R}$. Then, from \eqref{E:Eq-12} we get
\begin{equation} \label{E:Eq-13}
\frac{\alpha}{\beta} \,\kappa =  \frac{f f''+(f')^2 }{f'},  \quad \alpha \neq 0, \; \beta \neq 0.
\end{equation}
Since the left-hand side of equality \eqref{E:Eq-13} is a function of $v$, the right-hand side of \eqref{E:Eq-13} is a function of $u$, we obtain that 
\begin{equation} \notag
\begin{array}{l}
\vspace{2mm}
\frac{f f''+(f')^2 }{f'} = c, \quad c = const \neq 0;\\
\kappa = \frac{\beta}{\alpha}\, c.
\end{array}
\end{equation}
In this case  we have $\langle H,H \rangle  = \frac{\varepsilon(c^2 -\kappa^2)}{4f^2}$. 
Since we study surfaces with $\langle H,H \rangle \neq 0$, we get $c^2 \neq \kappa^2$.
The meridian curve $m$ is determined by the following differential
equation:
\begin{equation} \label{E:Eq-14}
f f''+(f')^2 = c f'.
\end{equation}
Setting  $f' = \phi (f)$ in equation \eqref{E:Eq-14}, we obtain
that the function $\phi  = \phi (t)$ satisfies
\begin{equation} \notag
\phi' + \frac{1}{t}\, \phi = \frac{c}{t},
\end{equation}
whose general solution  is  $\phi(t) = \frac{ct + b}{t}$, $b = const$.

\vskip 1mm
Conversely, if one of the cases (i) or (ii) stated in the theorem  holds true, then by direct computation we get that
$D_X H_0 = D_Y H_0 = 0$, i.e. the surface has parallel normalized  mean curvature vector field.
Moreover, in case (i) we have
$$D_XH = \frac{\kappa f'}{2f^2} \,n_2; \qquad D_YH = -  \frac{\kappa'}{2f^2 \sqrt{\varepsilon (\dot{\varphi}^2 - \varphi^2)}} \,n_2,$$
which implies that $H$ is not parallel in the normal bundle, since $\kappa \neq 0$, $f' \neq 0$. 
In case (ii) we get
$$D_XH = \frac{\varepsilon c f'}{2f^2} \,n_1 + \frac{\kappa f'}{2f^2} \,n_2; \qquad D_YH = 0,$$
and again we have that $H$ is not parallel in the normal bundle. 
\end{proof}

\vskip 2mm
\textit{Remark}. Theorem \ref{T:parallel-H0-p}  gives examples of Lorentz surfaces in the pseudo-Euclidean space $\E^4_2$ which have parallel normalized mean curvature vector field   but not parallel mean curvature vector field.

\vskip 5mm \textbf{Acknowledgments:}
The author is partially supported by the Bulgarian National Science Fund,
Ministry of Education and Science of Bulgaria under contract
DFNI-I 02/14.

\bigskip

\end{document}